\newcommand*{\No}{\textnumero}
\newtheorem{theorem}{Theorem}[section]
\newenvironment{proof}[1][]{\begin{trivlist}
\item[\hskip \labelsep {\bfseries #1}]}{\end{trivlist}}
\newcommand{\qed}{\nobreak \ifvmode \relax \else
      \ifdim\lastskip<1.5em \hskip-\lastskip
      \hskip1.5em plus0em minus0.5em \fi \nobreak
      \vrule height0.75em width0.5em depth0.25em\fi}
\begin{document}
\title{Game-Theoretical Strategy of Robot in the Area with Dynamical Obstacles}
\author[1]{Oleg Malafeyev\thanks{malafeyevoa@mail.ru}}
\affil[1]{Saint-Petersburg State University,  Russia}
\date{}
\maketitle
\begin{abstract}
The effectiveness of a robot manipulation to a large extent is determined by the speed of making this or that movement
needed for carrying out the task. Accordingly to this the problem of optimal robot control is often subdivided into two
subproblems solved separately. In an autonomous regime the trajectory planning is fulfilled for providing the robot movement
time close to the minimal.
\par
The problem of a robot reaching a moving aim under the existence of movable (or in particular immovable) obstacles is considered in the report. 
This problem is represented as two-person zero-sum game, taking place in a metric space $X$.
\end{abstract}

\textbf{Keywords:} Process of Conflict, Robot, Pursuer, Evader, Motion Planning.

\textbf{Mathematics Subject Classification (2010):} 91-08, 91A23, 49K99.

\section{Introduction}

Let us suppose that at every moment of the conflict process the robot is informed of the trajectory of an obstacle and also a rule of changing the obstacle configuration up to this moment. By $P_1'(x_0,t_0,t)$ we denote the robot’s attainability function in the space $X$, and by $O(t)$-the position of the obstacle in $X$. Thus $O(t)$ is supposed to be a compact subset of $X$.
\par
Let us consider the multivalued function $P_1'(x_0,t_0,t)\cap O(t)=P_1(x_0,t_0,t)$ appearing here. We interpret the set $P_1(x_0,t_0,t)$ as a set of the positions which the robot may reach in under restriction imposed by moving obstacles. By that a quasi-robot can be introduced, with the dynamics described by the attainability function $P_1(x_0,t_0,t)$. It is shown that this function satisfies all the axioms setting a general dynamic system in $X$.
\par
Similar reasoning is valid for the aim $P_2$ for which the attainability function $P_2(x_2,t_0,t)$ is introduced.
\par
Now the problem of the robot and the aim meeting is represented mathematically quite strictly as a dynamic game in the space $X$.

\section{Analysis}
Let us consider a problem of the robot’s pursuit of the moving aim (e.g. another robot) under the existence of moving obstacles. 
This problem may be reduced to a dynamic conflict process.
Such processes take place in a complete locally compact metric space $X$. The maneuvering abilities of the pursuing (1) and the evading (2) robots participating in the process 
are given by generalized dynamic systems $P_1$,$P_2$ in $X$. 
The generalized dynamic system $P_l$,$l=1,2$ is defined by means of the family of multivalued mappings of the space X into itself, which is 
denoted by $P_l(x^l,t)$, $l=1,2$ and called the attainability function of robot $l$. 
Intuitively, $P_l(x^l,t)$ is the set of points of the space $X$, which the robot $l$ can reach from the $x^l$  in time $t \ge 0$. 
The function $P_l(x^l,t)$ is supposed to meet the following acsioms:

\begin{enumerate}[I.]
	\item $P_l(x^l, t)$ is defined for all $x^l \in X, t \ge 0$ and is supposed to be a nonempty compact set of the space X.
	\item The initial condition: $P_l(x^l, 0)=x^l$ for all $x^l \in X$ is supposed to be valid.
	\item The semigroup property: for all the values $t_1 \le t_2, x_0^l \in X$,
	\[
		P_l(x_0^l,t_2)=\bigcup_{x_1^l \in P_l(x_0^l,t_1)} P_l(x_1^l, t_2 - t_1)
	\]
	\item The function $P_l(x^l, t)$ is supposed to be jointly continuous in the Hausdorff metric.
\end{enumerate}

Let $\Sigma$ be the set of finite partitions $\sigma$ of interval $[0,T]$, $T<\infty$:
$\sigma=0 \le t_1 \le t_2 \le ... \le t_{N_\sigma}$. At every moment $t \in [0,T]$ of the process $\Gamma_i(x_0^1,x_0^2,T), i = 1,2$ each robot is informed of 
the positions of both robots - the points $x^l(t), l=1,2$, and their maneuvering possibilities, defined by the functions $P_l,l=1,2$; 
the duration $T<\infty$ of the process is known as well.
Now we  shall define the strategies of the robots in the process $\Gamma_i(x_0^1,x_0^2,T), i = 1,2$. The strategy $\varphi_l$ if the robot $l$ in the conflict process $\Gamma_i(x_0^1,x_0^2,T)$
it the pair $(\sigma_{\varphi_l},K_\sigma^l)$, where $\sigma_{\varphi_l} \in \Sigma_T$ and $K_\sigma^l$ is a mapping, transforming the pair
\[
	\hat{x}_{t_k}^1 \in \hat{P}_1(x_0^1,t_k), \hat{x}_{t_k}^2 \in \hat{P}_2(x_0^2,t_k), t_k \in \sigma_{\varphi_l}=\sigma_l
\]
into trajectory
\[
	\hat{x}_{t_{k+1}-t_k}^l \in \hat{P}_l(\hat{x}_{t_k}^l(t_k), t_{k+1} - t_k).
\]

The pair $(\varphi_1,\varphi_2)=((\sigma_{\varphi_1},K_{\sigma_1}^1),(\sigma_{\varphi_2},K_{\sigma_2}^2)) \in \Phi_1 \times \Phi_2$ 
is called a situation in the process 
$\Gamma_i(x_0^1,x_0^2,T)$. Then the payoff functions $H_l$ are defined on the set $\Phi_1 \times \Phi_2$.
Let a continuous function $H : X \times X \rightarrow R_1$ be defined on the product $X \times X$

In the process $\Gamma_1(x_0^1,x_0^2,T)$ by every strategy pair $(\phi_1,\phi_2)=\phi$ value
\[
	H_1(\phi)=H(\chi(\phi)(T))=\bar{H}_1(\chi(\phi)),
\]
which is called the payoff of the evading robot, is calculated. In the process $\Gamma_2(x_0^1,x_0^2,T)$ the value
\[
	H_2(\phi)=\min_{t \in [0,T]} H(\chi(\phi)(T))=\bar{H}_2(\chi(\phi))
\]
- the payoff of the pursuing robot - is calculated by every pair $(\phi_1,\phi_2)=\phi$.

In both processes robot 2, choosing the strategy $\Phi_2$, tries to	maximize its
payoff function; the aim of robot 1 is contrary.

Let us remind that if $G : \Phi_1 \times \Phi_2 \rightarrow R_1$, then the pair $(\phi_1,\phi_2)$ is called
a sddle point of the process $\Gamma_G$ ($\varepsilon$-saddle point of the process $\Gamma_G$), if for all 
$\varphi_1' \in \Phi_1$, $\varphi_2' \in \Phi_2$ the inequalities
\begin{eqnarray*}
	G(\phi_1,\varphi_2') \le&G(\phi_1,\phi_2)& \le G(\varphi_1,\phi_2') \nonumber \\
	(G(\phi_1,\varphi_2') - \varepsilon \le&G(\phi_1,\phi_2)& \le G(\varphi_1,\phi_2') + \varepsilon) \nonumber
\end{eqnarray*}
are true. We shall call the process $\Gamma_1(x_0^1,x_0^2,T)=\Gamma_{H_1} = \langle \Phi_1,\Phi_2,H_1 \rangle$
as a process with a terminal payoff, and the process $\Gamma_2(x_0^1,x_0^2,T)=\Gamma_{H_2} = \langle \Phi_1,\Phi_2,H_2 \rangle$
as a conflict process of evasion with a prescribed duration.

In the strategies considered above, the partition of the interval $[0,T]$ 
of the conflict process was chosen by the robot before the process beginning. 
However in some cases it might be convenient to remove such restriction and 
let the the robot choose the point $t_{k+1}$ of the partition $\sigma$ at the moment $t_k$, 
supposing that the resulting partition $\Sigma$ belongs to the set $\Sigma_T$ of finite partitions of $[0, T]$. 
In future we shall call them piecewise-programmed strategies with a non-prescribed in advance partition of the conflict process interval,
and the strategies of the first type - just piecewise-programmed strategies.
Now we shall consider for the process $\Gamma_i(x_0^1,x_0^2,T)$ some auxiliary processes 
$\underline{\Gamma}_i^\sigma(x_0^1,x_0^2,T),\overline{\Gamma}_i^\sigma(x_0^1,x_0^2,T)$ which we shall call lower and upper approximate 
auxiliary processes for the processes $\Gamma_i^\sigma(x_0^1,x_0^2,T)$. Here $\sigma \in \Sigma_T$. Let us suppose that 
the partitions $\sigma = \sigma_n$ are binary, $t_{k+1}-t_k = T/2^n, k = 0,2^n - 1$ and denote 
the set of such partitions by $\Sigma_T^2$. Dynamics of robots 1 an 2 maneuvering abilities in all the auxiliary processes 
$\underline{\Gamma}_i^\sigma(x_0^1,x_0^2,T),\overline{\Gamma}_i^\sigma(x_0^1,x_0^2,T)$ are the same as in the basic processes 
and are defined by means of the generalized dynamic system.

Let the partition $\sigma_n \in \Sigma_T^2$ be fixed now. The process 
$\underline{\Gamma}_i^\sigma(x_0^1,x_0^2,T)$ runs as follows at
the moment $t_0=0$ robot 2, being informed of the initial positions of both
robots $x_0^1$, $x_0^2$, chooses the trajectory $\hat{x}^2[x_0^2, t_1]$, 
$t_1=\sigma_n= T/2^n$. Knowing the initial positions of robots $1$ and $2$
and the trajectory $\hat{x}^2[x_0^2, t1]$ chosen by the robot $2$, robot $1$ 
chooses the trajectory $\hat{x}^1[x_0^1, t_1]$. On the second step, at the 
moment $t_1$, knowing the trajectories $\hat{x}^l[x_0^l, t_1], l=1,2$,
robot 2 chooses the trajectory $\hat{x}^2[x_1^2, \sigma_n]$ for the next interval 
of time $[t_1,t_2]$; and knowing the trajectories $\hat{x}^2[x_1^2,  \sigma_n]$, 
$\hat{x}^1[x_0^1, t_1]$ robot 1 chooses the trajectory
$\hat{x}^1[x_1^1,  \sigma_n]$.

In the same way process continues up to the moment $T$ where it ends. 
As a result, in the process $\underline{\Gamma}_i^\sigma(x_0^1,x_0^2,T)$ 
robot 2 receives from the robot 1 a payoff, equal to the 
$\overline{H}_i(\hat{x}_T^1,\hat{x}_T^2), i=1,2$. 
Here $(\hat{x}_T^1,\hat{x}_T^2)$
is the trajectory, realized in the process 
$\underline{\Gamma}_i^\sigma(\cdot)$.

The process $\overline{\Gamma}_i^\sigma(\cdot)$ proceeds in a dual manner.
At the initial moment $t_0=0$, knowing the initial positions $x_0^1$, $x_0^2$
of the robots, robot 1 chooses the trajectory $\hat{x}^1[x_0^1, t_1]$. Knowing
the trajectory $\hat{x}^1[x_0^1, t_1]$ robot 2 chooses its trajectory 
$\hat{x}^2[x_0^2, t_1]$. The process is repeated in the same way at the 
following steps $2,3,\dotsc,N_\sigma$. At the $N_\sigma$-th step, 
the process ends after which, in the process 
$\overline{\Gamma}_i^\sigma(x_0^1,x_0^2,T)$
the robot 2 receives from the robot 1 the payoff equal to the
$\overline{H}_i(\hat{x}_T^1,\hat{x}_T^2), i=1,2$. 
Here $(\hat{x}_T^1,\hat{x}_T^2)$
is the trajectory, realized in the process 
$\overline{\Gamma}_i^\sigma(\cdot)$.

For proving the existence theorems for equilibrium points in the process
$\Gamma_i(\cdot)$, there will be a need for an auxiliary process 
$\hat{\Gamma}_i^\sigma(\cdot)$, which is truncation of the process 
$\overline{\Gamma}_i^\sigma(\cdot)$ at the last step. This process differs
from the process $\overline{\Gamma}_i^\sigma(\cdot)$ only in the fact
that in this process at the last $N_\sigma$-th step robot 2 does not make 
a choice of the trajectory $\hat{x}^2[x_{N_\sigma-1},\sigma_n]$.

Now let us formulate several auxiliary statements: 
\begin{enumerate}[1.]
	\item In the processes $\overline{\Gamma}_i^\sigma(x_0^1,x_0^2,T)$,
	$\underline{\Gamma}_i^\sigma(\cdot)$, $\hat{\Gamma}_i^\sigma(\cdot)$
	saddle points in pure strategies exist, the functions of value 
	$Val(\overline{\Gamma}_i^\sigma(\cdot) )$,
	$Val(\underline{\Gamma}_i^\sigma(\cdot))$,
	$Val(\hat{\Gamma}_i^\sigma(\cdot))$ are separately continuous by $x_0^1$,
	$x_0^2$. For any partition $\sigma \in \Sigma_T^2$ the inequality
	\[
		Val(\overline{\Gamma}_i^\sigma(x_0^1,x_0^2,T) ) \ge 
		Val(\underline{\Gamma}_i^\sigma(x_0^1,x_0^2,T))
	\]
	is satisfied.
	\item For any pairs of partitions $\sigma,\sigma' \in \Sigma_T$,
	such that $\sigma'$ is a refinement of $\sigma$:
	\begin{eqnarray*}
		Val(\overline{\Gamma}_i^\sigma(x_0^1,x_0^2,T) ) \ge 
		Val(\overline{\Gamma}_i^{\sigma'}(x_0^1,x_0^2,T)), \\
		Val(\underline{\Gamma}_i^\sigma(x_0^1,x_0^2,T) ) \le 
		Val(\underline{\Gamma}_i^{\sigma'}(x_0^1,x_0^2,T)).
	\end{eqnarray*}
	\item For any sequence $\{\sigma_n\}_{n=1}^\infty$ of partitions
	of the interval $[0,T]$, $\sigma_n \in \Sigma_T^2$, such that 
	$|\sigma_n| \xrightarrow[n \to \infty]{}0$:
	\[
		\lim_{n \to \infty}Val(\overline{\Gamma}_i^{\sigma_n}(x_0^1,x_0^2,T) ) = 
		\lim_{n \to \infty}Val(\underline{\Gamma}_i^{\sigma_n}(x_0^1,x_0^2,T) )
	\]
	\item For any sequences $\{\sigma_n\}_{n=1}^\infty, 
	\{\sigma'_n\}_{n=1}^\infty;\sigma_n, \sigma'_n \in \Sigma_T$, such that
	$|\sigma_n| \xrightarrow[n \to \infty]{}0$,
	$|\sigma'_n| \xrightarrow[n \to \infty]{}0$ the equality
	\[
		\lim_{n \to \infty}Val(\overline{\Gamma}_i^{\sigma_n}(x_0^1,x_0^2,T) ) = 
		\lim_{n \to \infty}Val(\overline{\Gamma}_i^{\sigma'_n}(x_0^1,x_0^2,T) )
	\]
	is true.
\end{enumerate}

These statements allow us to formulate the following theorem:

\begin{theorem}
	For any $x_0^1,x_0^2 \in X$, $T < \infty$ and $\varepsilon > 0$ in the process $\Gamma_1(x_0^1,x_0^2,T)$ there exists a pair of 
	$\varepsilon$-guaranteeing strategies, and besides
	\[
		Val(\overline{\Gamma}_1(x_0^1,x_0^2,T)) = 
		\lim_{n \to \infty}Val(\overline{\Gamma}_1^{\sigma_n}(x_0^1,x_0^2,T)),
	\]
	where $\{\sigma_n\}_{n=1}^\infty$ - is any refinery sequence of partitions
	$[0,T]$, $|\sigma_n| \xrightarrow[n \to \infty]{}0$
\end{theorem}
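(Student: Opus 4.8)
\emph{Step 1: the value.} The plan is to read off the common value $v=v(x_0^1,x_0^2,T)$ from statements~1--4 and then to convert the optimal pure strategies of the \emph{ordered} auxiliary processes into admissible $\varepsilon$-guaranteeing strategies of $\Gamma_1$. Fix a binary refining sequence $\{\sigma_n\}\subset\Sigma_T^2$ with $|\sigma_n|\to 0$. By statement~2 the numbers $Val(\overline{\Gamma}_1^{\sigma_n}(x_0^1,x_0^2,T))$ are non-increasing in $n$ and $Val(\underline{\Gamma}_1^{\sigma_n}(x_0^1,x_0^2,T))$ are non-decreasing, and by statement~1 the former always dominates the latter; hence both sequences are monotone and bounded, so both converge, and by statement~3 to the same limit, which we call $v$. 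By statement~4 this limit is the same for every sequence of partitions with mesh tending to $0$, so along an arbitrary refining sequence $\{\sigma_n\}$ (where $Val(\overline{\Gamma}_1^{\sigma_n})$ is again monotone and bounded by statements~1--2, hence convergent) we also get $\lim_n Val(\overline{\Gamma}_1^{\sigma_n})=v$. We set $Val(\overline{\Gamma}_1(x_0^1,x_0^2,T)):=v$; this is the displayed identity, and it remains to produce a pair of strategies each guaranteeing a payoff within $\varepsilon$ of $v$.

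\emph{Step 2: the pursuer.} Fix $\varepsilon>0$ and pick $n$ with $Val(\overline{\Gamma}_1^{\sigma_n})<v+\varepsilon$. By statement~1 let $\chi_1$ be robot~1's part of a pure saddle point of $\overline{\Gamma}_1^{\sigma_n}$. In $\overline{\Gamma}_1^{\sigma_n}$ robot~1 \emph{moves first} on every step, i.e.\ $\chi_1$ selects the trajectory on $[t_k,t_{k+1}]$ from the history up to $t_k$ alone; this is exactly an admissible piecewise-programmed strategy of robot~1 in $\Gamma_1$, so $\chi_1\in\Phi_1$. For any $\varphi_2\in\Phi_2$, robot~2 in $\Gamma_1$ is non-anticipating and at no moment of an interval $[t_k,t_{k+1}]$ knows more about robot~1's trajectory there than it does in $\overline{\Gamma}_1^{\sigma_n}$ (where it sees that whole trajectory at $t_k$); since $\chi_1$ acts only at the points of $\sigma_n$ and only on the past, the play produced by $(\chi_1,\varphi_2)$ in $\Gamma_1$ is reproduced by $\chi_1$ against some reaction of robot~2 in $\overline{\Gamma}_1^{\sigma_n}$. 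As $\chi_1$ holds the maximizing robot~2 down to $Val(\overline{\Gamma}_1^{\sigma_n})$ there, $H_1(\chi_1,\varphi_2)\le Val(\overline{\Gamma}_1^{\sigma_n})<v+\varepsilon$.

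\emph{Step 3: the evader, and conclusion.} Dually, enlarging $n$ so that also $Val(\underline{\Gamma}_1^{\sigma_n})>v-\varepsilon$, take $\psi_2=$ robot~2's part of a pure saddle point of $\underline{\Gamma}_1^{\sigma_n}$. In $\underline{\Gamma}_1^{\sigma_n}$ robot~2 moves first on every step, so $\psi_2$ is an admissible piecewise-programmed strategy in $\Gamma_1$, and by the symmetric argument (now robot~1 is the non-anticipating one, using no more information than in $\underline{\Gamma}_1^{\sigma_n}$) we get $H_1(\varphi_1,\psi_2)\ge Val(\underline{\Gamma}_1^{\sigma_n})>v-\varepsilon$ for every $\varphi_1\in\Phi_1$. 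Hence $H_1(\chi_1,\varphi_2)<v+\varepsilon$ for all $\varphi_2$ and $H_1(\varphi_1,\psi_2)>v-\varepsilon$ for all $\varphi_1$; in particular $|H_1(\chi_1,\psi_2)-v|<\varepsilon$, so $(\chi_1,\psi_2)$ is a pair of $2\varepsilon$-guaranteeing strategies (equivalently a $2\varepsilon$-saddle point; start from $\varepsilon/2$ for the literal $\varepsilon$-statement), $\Gamma_1$ has value $v$, and $Val(\overline{\Gamma}_1(x_0^1,x_0^2,T))=v=\lim_{n\to\infty}Val(\overline{\Gamma}_1^{\sigma_n})$ along any refining sequence with $|\sigma_n|\to 0$.

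\emph{Expected main difficulty.} With statements~1--4 granted, the rest is essentially bookkeeping; the point that needs care is the information comparison in Steps~2--3 --- recognising that $\overline{\Gamma}$ assigns the pursuer the first-move (non-anticipating) role and $\underline{\Gamma}$ assigns it to the evader, so that the auxiliary optimal strategies transfer to $\Gamma_1$ \emph{verbatim}, and keeping the $\inf$/$\sup$ inequalities pointed the right way under possibly mismatched partitions. The genuinely analytic work --- separate continuity of $Val(\overline{\Gamma}_i^\sigma),Val(\underline{\Gamma}_i^\sigma),Val(\hat{\Gamma}_i^\sigma)$, the refinement monotonicity, and the coincidence and partition-independence of the limits, for which one uses the truncated process $\hat{\Gamma}_i^\sigma$, the joint-continuity axiom~IV, continuity of $H$ and local compactness of $X$ --- is precisely statements~1--4, which we assume here.
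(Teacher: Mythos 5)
Your proposal is correct and follows essentially the same route as the paper: use statements 1--4 to identify the common limit $v$, take robot~1's optimal pure strategy from a fine upper process $\overline{\Gamma}_1^{\sigma}$ and robot~2's from a fine lower process $\underline{\Gamma}_1^{\sigma}$, and transfer them to $\Gamma_1$ as an $\varepsilon$-guaranteeing pair. Your Steps~2--3 merely spell out the information-transfer argument that the paper compresses into ``because of the choice of the mapping $K_{\sigma_{l,\varepsilon}}^l$,'' so the two proofs coincide in substance.
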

\begin{proof}
	To prove that, let us fix the number $\varepsilon > 0$ and show, that 
	such strategies $\varphi_1^\varepsilon \in \Phi_1, \varphi_2^\varepsilon \in \Phi_2$ can be found, that for any strategies 
	$\varphi_1 \in \Phi_1, \varphi_2 \in \Phi_2$ the inequalities 
	\[
		H_1(\varphi_1^\varepsilon, \varphi_2) - \varepsilon
		\le
		H_1(\varphi_1^\varepsilon, \varphi_2^\varepsilon)
		\le
		H_1(\varphi_1, \varphi_2^\varepsilon) - \varepsilon
	\]
	are true.

	According to the statements 3 and 4 there can be found such partitions
	$\sigma_{1,\varepsilon}, \sigma_{2,\varepsilon} \in \Sigma_T$ that
	\begin{eqnarray*}
		Val(\overline{\Gamma}_1^{\sigma_{1,\varepsilon}}(\cdot))
		-
		\lim_{n \to \infty}Val(\overline{\Gamma}_1^{\sigma_n}(\cdot))
		< \varepsilon, \\
		\lim_{n \to \infty}Val(\underline{\Gamma}_1^{\sigma_n}(\cdot))
		-
		Val(\underline{\Gamma}_1^{\sigma_{2,\varepsilon}}(\cdot))
		< \varepsilon.
	\end{eqnarray*}
	Let $\varphi_l^\varepsilon = (\sigma_{l,\varepsilon}, K_{\sigma_{l,\varepsilon}}^l), l=1,2$ to be optimal strategies for robots 1 and 2 in
	the processes 
	$\overline{\Gamma}_1^{\sigma_{1,\varepsilon}}(x_0^1,x_0^2,T)$
	and
	$\underline{\Gamma}_1^{\sigma_{2,\varepsilon}}(x_0^1,x_0^2,T)$

	It follows from the definition of a strategy that the pair 
	$(\varphi_1^\varepsilon,\varphi_2^\varepsilon)$ is compatible, i.e. the
	only one process trajectory can be built for it in an only one way.
	Here, because of the choice of the mapping $K_{\sigma_{l,\varepsilon}}^l, l=1,2$,
	by means of the strategy $\varphi_1^\varepsilon$ robot 1 guarantees for
	itself a payoff not smaller than 
	\[
		\lim_{n \to \infty}Val(\underline{\Gamma}_1^{\sigma_n}(x_0^1,x_0^2,T))
		+ \varepsilon,
	\]
	and by means of the strategy $\varphi_2^\varepsilon$ robot 2 guarantees
	itself a payoff not smaller than 
	\[
		\lim_{n \to \infty}Val(\underline{\Gamma}_1^{\sigma_n}(x_0^1,x_0^2,T))
		- \varepsilon.
	\]
	Consequently, $\varphi_l^\varepsilon, l=1,2$ are the pair of the 
	$\varepsilon$-guaranteeing strategies strategies for both robots 
	and the function
	\[
		Val(\overline{\Gamma}_1(x_0^1,x_0^2,T)) = 
		\lim_{n \to \infty}Val(\overline{\Gamma}_1^{\sigma_n}(x_0^1,x_0^2,T)),
	\]
	is the function of value of the process $\overline{\Gamma}_1(x_0^1,x_0^2,T)$ \qed
\end{proof}

By analogy, for the process $\overline{\Gamma}_2(x_0^1,x_0^2,T)$ the following
theorem can be proved:
\begin{theorem}
	For any $x_0^1,x_0^2 \in X$, $T < \infty$ and $\varepsilon > 0$ in the process $\Gamma_2(x_0^1,x_0^2,T)$ there exists a pair of 
	$\varepsilon$-guaranteeing strategies. Here
	\[
		Val(\overline{\Gamma}_2(x_0^1,x_0^2,T)) = 
		\lim_{n \to \infty}Val(\overline{\Gamma}_2^{\sigma_n}(x_0^1,x_0^2,T)),
	\]
\end{theorem}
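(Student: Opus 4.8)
The plan is to mirror the proof of Theorem 1.1, transferring it to the pursuit process $\Gamma_2$ by replacing the terminal payoff $\bar H_1$ with the ``worst-case-over-time'' payoff $\bar H_2(\chi) = \min_{t\in[0,T]} H(\chi(t))$ throughout. The auxiliary statements 1--4 were formulated for both $i=1$ and $i=2$ simultaneously, so all the structural facts we need are already available for the processes $\overline{\Gamma}_2^\sigma(\cdot)$, $\underline{\Gamma}_2^\sigma(\cdot)$, $\hat{\Gamma}_2^\sigma(\cdot)$: existence of saddle points in pure strategies on each binary partition, separate continuity of the value functions in $x_0^1,x_0^2$, the sandwich inequality $\mathrm{Val}(\overline{\Gamma}_2^\sigma)\ge \mathrm{Val}(\underline{\Gamma}_2^\sigma)$, monotonicity of the two value families under refinement, and the coincidence of their limits along any vanishing-mesh refining sequence (statement 3), together with independence of that common limit from the chosen sequence (statement 4). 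That common limit is what we name $\mathrm{Val}(\overline{\Gamma}_2(x_0^1,x_0^2,T))$.

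First I would fix $\varepsilon>0$ and invoke statements 3 and 4 to select partitions $\sigma_{1,\varepsilon},\sigma_{2,\varepsilon}\in\Sigma_T$ such that $\mathrm{Val}(\overline{\Gamma}_2^{\sigma_{1,\varepsilon}}(\cdot)) - \lim_n \mathrm{Val}(\overline{\Gamma}_2^{\sigma_n}(\cdot)) < \varepsilon$ and $\lim_n \mathrm{Val}(\underline{\Gamma}_2^{\sigma_n}(\cdot)) - \mathrm{Val}(\underline{\Gamma}_2^{\sigma_{2,\varepsilon}}(\cdot)) < \varepsilon$. Next I would take $\varphi_1^\varepsilon$ to be an optimal (pure) strategy for robot 1 in $\overline{\Gamma}_2^{\sigma_{1,\varepsilon}}(\cdot)$ and $\varphi_2^\varepsilon$ an optimal strategy for robot 2 in $\underline{\Gamma}_2^{\sigma_{2,\varepsilon}}(\cdot)$, whose existence is guaranteed by statement 1. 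As in Theorem 1.1, compatibility of the pair $(\varphi_1^\varepsilon,\varphi_2^\varepsilon)$ follows from the definition of a piecewise-programmed strategy: each player's map $K^l$ produces its trajectory on $[t_k,t_{k+1}]$ from the data revealed by $t_k$, so a single realized trajectory $\chi(\varphi_1^\varepsilon,\varphi_2^\varepsilon)$ is determined. Then, using that $\varphi_1^\varepsilon$ guarantees robot 1 an upper bound of $\lim_n\mathrm{Val}(\overline{\Gamma}_2^{\sigma_n})+\varepsilon$ against any reply, and $\varphi_2^\varepsilon$ guarantees robot 2 a lower bound of $\lim_n\mathrm{Val}(\underline{\Gamma}_2^{\sigma_n})-\varepsilon$, and that these two limits coincide by statement 3, I would conclude the two-sided inequality $H_2(\varphi_1^\varepsilon,\varphi_2) - \varepsilon \le H_2(\varphi_1^\varepsilon,\varphi_2^\varepsilon) \le H_2(\varphi_1,\varphi_2^\varepsilon)+\varepsilon$ for all $\varphi_1,\varphi_2$, i.e. $(\varphi_1^\varepsilon,\varphi_2^\varepsilon)$ is an $\varepsilon$-saddle point, which also identifies $\mathrm{Val}(\overline{\Gamma}_2) = \lim_n\mathrm{Val}(\overline{\Gamma}_2^{\sigma_n})$.

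The one genuinely non-routine point — and the only place where the change from $\bar H_1$ to $\bar H_2$ bites — is the guarantee step: I must argue that an optimal strategy for a \emph{fixed} partition $\sigma_{1,\varepsilon}$ in $\overline{\Gamma}_2^{\sigma_{1,\varepsilon}}$ still secures the claimed bound when robot 2 is allowed an \emph{arbitrary} strategy (with a possibly much finer, or non-prescribed, partition) in the full process $\Gamma_2$. For the terminal payoff $\bar H_1$ this rests on the semigroup property (Axiom III) plus joint Hausdorff-continuity (Axiom IV) of the attainability functions: refining robot 2's partition does not enlarge what it can attain at time $T$, so robot 1's $\sigma_{1,\varepsilon}$-optimal response remains adequate up to the $\varepsilon$ already absorbed. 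The same monotonicity-under-refinement argument transfers verbatim to $\bar H_2 = \min_{t\in[0,T]}H(\chi(t))$, because the $\min$ over $t$ is a continuous, monotone operation on trajectories (in the uniform metric), so continuity and the refinement inequalities of statement 2 carry over. I would therefore spend the bulk of the written proof re-deriving precisely this guarantee estimate in the $\bar H_2$ setting, and then state that the remainder of the argument is identical to the proof of Theorem 1.1, reading $\Gamma_2$, $\overline{\Gamma}_2^\sigma$, $\underline{\Gamma}_2^\sigma$ for $\Gamma_1$, $\overline{\Gamma}_1^\sigma$, $\underline{\Gamma}_1^\sigma$ throughout.
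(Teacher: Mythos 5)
Your proposal is correct and follows precisely the route the paper intends: the paper gives no separate proof of this theorem, stating only that it is proved ``by analogy'' with the first theorem, and your argument is exactly that analogy — reusing auxiliary statements 1--4 (which are stated for $i=1,2$), choosing $\sigma_{1,\varepsilon},\sigma_{2,\varepsilon}$, taking optimal strategies in $\overline{\Gamma}_2^{\sigma_{1,\varepsilon}}$ and $\underline{\Gamma}_2^{\sigma_{2,\varepsilon}}$, and passing to the common limit of the upper and lower values. Your added care at the guarantee step, where the terminal payoff is replaced by $\min_{t\in[0,T]}H(\chi(t))$, is a sensible elaboration of a point the paper leaves implicit, not a departure from its method.
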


Now let us consider the processes with a time of capture payoff 
$\Gamma(x_0^1,x_0^2)$ 
running on the interval $[0,\infty)$, 
which makes them differ from the processes with prescribed duration. 
The process $\Gamma(x_0^1,x_0^2)$  runs in the full local compact 
metric space $X$, the maneuvering abilities of robots 1 and 2 are defined by 
means of generalized dynamic system.

Information states of both robots in the process $\Gamma(x_0^1,x_0^2)$
are the same as in the processes $\Gamma_i(x_0^1,x_0^2,T), i=1,2$. 
The strategy $\varphi_l$ of robot $l$ in process $\Gamma(x_0^1,x_0^2)$
is the pair $(\sigma_l, K_{\sigma_l})$, where $\sigma_l=\{t_0=0<t_1<\cdots<t_k<\cdots\}$ is a partition of the semiline $[0,\infty)$, containing no limit
 points, and $K_{\sigma_l}$ is a mapping, transforming the information 
 state of robot $l$ at the moment $t_k \in \sigma_l$ into the trajectory
 $\hat{x}^l \in \hat{P}_l(x_l^k, t_{k+1}-t_k)$. We shall denote the strategy set of robot $l$ in the process $\Gamma(x_0^1,x_0^2)$ by $\Phi_l$.

 As it was in the case of prescribed duration for the processes with a time of 
 capture a payoff for each pair $(\varphi_1,\varphi_2)$ has one and only one 
 corresponding pair 
 of the trajectories of robots 1 and 2, which are defined on the ray 
 $[0,\infty]$ and will be denoted by
\[
	(\hat{x}_\infty^1,\hat{x}_\infty^2)=\chi(\varphi_1,\varphi_2)
\]

Now let $M$ be a non-empty closed set of $X \times X$. Let us define the 
payoff function for the pair $(\varphi_1, \varphi_2)$ in this way:
\begin{eqnarray*}
	&H_\alpha(\varphi_1, \varphi_2)=H(\chi(\varphi_1,\varphi_2))=
	\min_{t \in [0.\infty]}\{t\ | \ \chi(\varphi_1,\varphi_2) \in M_\alpha\}& 
	\\
	&(M_\alpha = \{z \in X \times X \ | \ \rho(z,M) \le \alpha\})&
\end{eqnarray*}

If $H_\alpha(\varphi_1, \varphi_2)=\infty$, then for the pair
$(\varphi_1, \varphi_2)$ the process $\Gamma(x_0^1,x_0^2)$ can not end
within a finite time. By choosing the strategy $\varphi_1$ robot 1 tries to 
minimize its payoff function, the aim of robot 2 is opposite.

We shall call the strategy $\varphi_1 \in \Phi_1$ successful, if for any
strategy $\varphi_2$  in the situation $(\varphi_1, \varphi_2)$ the process $\Gamma(x_0^1,x_0^2)$ ends within a finite time.

Thus, having defined the strategy sets of robots 1 and 2 in the process as 
well as the payoff function on the products of these sets, we have defined 
the process $\Gamma(x_0^1,x_0^2)$ in the normal form.

Let us consider the following theorem:
\begin{theorem}
	If in the process $\Gamma(x_0^1,x_0^2)$ robot 1 has a successful strategy
	for every $\alpha > 0$, then in this process for every $\varepsilon > 0$ 
	there exists a pair of $\varepsilon$-guaranteeing strategies.
\end{theorem}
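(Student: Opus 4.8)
The plan is to reduce this infinite‑horizon game with the time‑of‑capture payoff to a prescribed‑duration game of exactly the kind treated in Theorems~1 and~2, the reduction being legitimate because a successful strategy of robot~1, together with the compactness built into Axiom~I and the local compactness of $X$, forces a \emph{uniform} bound on the capture time. Fix $\varepsilon>0$ and fix the level $\alpha>0$ at which an $\varepsilon$-saddle point is sought; by hypothesis robot~1 has a strategy $\varphi_1^{*}=(\sigma_1,K^1)$ that is successful at the smaller level $\alpha/2$, and we keep $\varphi_1^{*}$ fixed.

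The first and hardest step is to produce a finite $T=T(\alpha)$ with $H_\alpha(\varphi_1^{*},\varphi_2)\le T$ for every $\varphi_2\in\Phi_2$ — i.e.\ to upgrade eventual capture against every evader to capture within a uniform time. Assume this fails. For each $k$ let $S_k$ be the set of restrictions to $[0,t_k]$ of those evader trajectories against which robot~1, playing $\varphi_1^{*}$, keeps $\rho(\chi(t),M)\ge\alpha$ for all $t\le t_k$; since $\sigma_1$ has no limit points, $t_k\to\infty$, and the failure assumption makes every $S_k$ nonempty. Each $S_k$ is compact: the evader trajectories lie in the compact sets $P_2(x_0^2,\cdot)$ (Axiom~I) and are equicontinuous by Axiom~IV (for $s\le s'$ one has $x^2(s')\in P_2(x^2(s),s'-s)$ and $d_H(P_2(y,0),P_2(y,s'-s))\to 0$ uniformly on compacta), while consistency with the causal response map $K^1$ and the constraint $\rho(\cdot,M)\ge\alpha$ are closed conditions — here the slack between $\alpha$ and $\alpha/2$ is exactly what makes the constraint closed rather than open. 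A finite‑intersection (König‑type) argument over the restriction maps $S_{k+1}\to S_k$ then yields an evader trajectory $\bar x^2$ on $[0,\infty)$ for which $\chi(\varphi_1^{*},\bar\varphi_2)$, with $\bar\varphi_2$ the programmed strategy playing $\bar x^2$, satisfies $\rho(\chi(\varphi_1^{*},\bar\varphi_2)(t),M)\ge\alpha>\alpha/2$ for all $t$; hence $H_{\alpha/2}(\varphi_1^{*},\bar\varphi_2)=\infty$, contradicting successfulness of $\varphi_1^{*}$ at level $\alpha/2$. I expect this passage to be the real obstacle; everything after it is bookkeeping.

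With $T$ in hand, set $T'=T+1$ (and, without loss of generality, $\varepsilon<1$) and consider the prescribed‑duration process on $[0,T']$ with the capped payoff $\widetilde H(\varphi_1,\varphi_2)=\min\{t\in[0,T']:\chi(t)\in M_\alpha\}$, equal to $T'$ when no capture occurs by $T'$. This is a first‑entry/terminal‑set payoff of the same type as in $\Gamma_2$, and its lower and upper approximate auxiliary processes $\underline\Gamma^{\sigma_n},\overline\Gamma^{\sigma_n}$ (binary refining $\sigma_n$, $|\sigma_n|\to 0$) satisfy the analogues of auxiliary statements~1--4; the only point not already supplied by the paper is lower semicontinuity of the first‑entry‑time functional under uniform convergence of trajectories, which follows from $M$ closed and the $P_l$ compact. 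Then, exactly as in the proofs of Theorems~1 and~2, pick partitions $\sigma_{1,\varepsilon},\sigma_{2,\varepsilon}$ whose values $Val(\overline\Gamma^{\sigma_{1,\varepsilon}})$ and $Val(\underline\Gamma^{\sigma_{2,\varepsilon}})$ lie within $\varepsilon$ of the common limit, and let $\varphi_1^{\varepsilon},\varphi_2^{\varepsilon}$ be the associated optimal pure strategies; compatibility of the pair, together with the choice of the response maps, gives the $\varepsilon$-saddle inequalities for $\widetilde H$. Finally, since robot~1 can always force capture within $T<T'$ by playing $\varphi_1^{*}$, the near‑optimal $\varphi_1^{\varepsilon}$ also captures strictly before $T'$ against every $\varphi_2$, so on all relevant plays $\widetilde H$ coincides with $H_\alpha$; hence $(\varphi_1^{\varepsilon},\varphi_2^{\varepsilon})$ is a pair of $\varepsilon$-guaranteeing strategies in $\Gamma(x_0^1,x_0^2)$ itself. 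As $\alpha,\varepsilon>0$ were arbitrary, the theorem follows.
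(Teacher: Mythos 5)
Your proof takes a different route from the paper's, and its central step does not go through. The crux of your argument is the ``upgrade'' of finite capture time against every $\varphi_2$ to a uniform bound $T$ for the single fixed strategy $\varphi_1^{*}$, proved by compactness of the sets $S_k$ plus an inverse-limit argument. For $S_k$ to be closed (hence compact) you need the map sending an evader trajectory to the realized play $\chi(\varphi_1^{*},\cdot)$ to be continuous; but a strategy in this paper is an arbitrary pair $(\sigma_1,K^1)$, and no continuity whatsoever is imposed on the response map $K^1$. If $x^2_n\to\bar x^2$ uniformly on compacta, the pursuer's responses to the $x^2_n$ under $K^1$ need not converge to its response to $\bar x^2$, so the limiting evader trajectory you extract may perfectly well be captured at once by $\varphi_1^{*}$ --- the slack between $\alpha$ and $\alpha/2$ only makes the \emph{distance} constraint closed, it does nothing about the discontinuity of $K^1$. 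So the uniform bound is not established, and it is in any case stronger than what the theorem requires. The paper never bounds the capture time of a fixed strategy: it works with the lower value $\underline{V}_{\alpha/2}=\sup_{\varphi_2}\inf_{\varphi_1}H_{\alpha/2}(\varphi_1,\varphi_2)$, where the pursuer's reply is allowed to depend on $\varphi_2$, and uses only that for each $\varphi_2$ some $\varphi_1$ achieves $H_{\alpha/2}\le\underline{V}_{\alpha/2}+\delta$ (successfulness makes each inner infimum finite; finiteness of the supremum is taken for granted there, but that is a much weaker assumption than your uniform bound for one fixed $\varphi_1^{*}$).

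Even granting a uniform $T$, the second half of your argument is not ``bookkeeping''. You feed the capped first-entry-time functional $\widetilde H$ into the machinery of Theorems 2.1--2.2, but the auxiliary statements 1--4 and both theorems are formulated for payoffs built from a \emph{continuous} function $H$ on $X\times X$ (terminal value, or $\min_{t}H(\chi(t))$). The first-entry time into $M_\alpha$ is only lower semicontinuous in the trajectory, and for a merely lower semicontinuous payoff the existence of saddle points in $\overline{\Gamma}^{\sigma}$, $\underline{\Gamma}^{\sigma}$ and, more seriously, the coincidence of the limits of upper and lower values (statements 1, 3, 4) are not automatic; this is exactly the difficulty the paper's proof avoids. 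Its route is: form the prescribed-duration process $\Gamma_2(x_0^1,x_0^2,\underline{V}_{\alpha/2}+\delta)$ with the continuous payoff $\min_{t\in[0,\underline{V}_{\alpha/2}+\delta]}\rho(\chi(\varphi_1,\varphi_2)(t),M_{\alpha/2})$, invoke Theorem 2.2 to get $\varepsilon$-equilibria and value $0$, and then translate ``within $\varepsilon$ of $M_{\alpha/2}$ by time $\underline{V}_{\alpha/2}+\delta$'' into ``inside $M_{\alpha}$ by that time'' for robot 1, with the symmetric evasion estimate (staying outside $M_{\alpha/2}$ up to roughly $\underline{V}_{\alpha/2}-\varepsilon$) for robot 2. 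To salvage your approach you would have to redo the whole approximation theory for lower semicontinuous first-entry payoffs; the simpler fix is to switch, after your reduction to a finite horizon, to the paper's distance-to-target payoff.
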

\begin{proof}
	To prove it, let us choose an arbitrary $\delta > 0$. Let us set
	\[
		\underline{V}_{\alpha/2} = 
		\sup_{\{\varphi_2\}}\inf_{\{\varphi_2\}}H_{\alpha/2}(\varphi_1, \varphi_2).
	\]
	It follows from this, that for every strategy $\varphi_2$ there can be
	found such $\varphi_1$, that
	\[
		H_{\alpha/2}(\varphi_1, \varphi_2) \le
		 \underline{V}_{\alpha/2} + \delta
	\]
	Let us consider the process 
	$\Gamma_2(x_0^1,x_0^2,\underline{V}_{\alpha/2} + \delta)$ with the
	payoff function 
	\[
		H_2(\varphi_1, \varphi_2) = 
		\min_{t \in [0, \underline{V}_{\alpha/2} + \delta]}\rho(\chi(\varphi_1, \varphi_2)(t), M_{\alpha/2})
	\]
	From the previous theorem there exist an equilibrium point for every 
	$\varepsilon > 0$ in this process and besides
	\[	
		Val(\Gamma_2(x_0^1,x_0^2,\underline{V}_{\alpha/2} + \delta))=0
	\]
	It means that for every $\varepsilon > 0$ such a strategy 
	$\varphi_{1,\varepsilon}$ can be fond, that for every strategy 
	$\varphi_2$ robot 1 is guaranteed to approach the set $M_{\alpha/2}$
	at distance $\varepsilon$ within the time 
	$\underline{V}_{\alpha/2} + \delta$ for every $\delta > 0$ and, 
	consequently, for every $\alpha > 0$ the set $M_\alpha$ within the time
	$\underline{V}_{\alpha/2}$.

	In the same way, for every $\alpha > 0$ and $\varepsilon > 0$ there exists 
	such a strategy $\varphi_{2,\varepsilon}$, which guarantees the robot 2
	the relation $\varphi_2(t) \notin {M_{alpha/2}}$ within the time 
	$\underline{V}_{\alpha/2} - \varepsilon$. \qed
\end{proof}

Let us suppose that $M=\{(x,x) \in X \times X\}$, i.e. it is a diagonal in 
$X \times X$.
We shall define
\[
	T^*(x_0^1,x_0^2) = 
	\min_{t\in[0,\infty]}\{ t \ | \ P_2(x_0^2,t) \subset P_1(x_0^1,t) \}
\]
and suppose that $T^*<\infty$.

Let us formulate the following theorem:
\begin{theorem}
	If in the process $\underline{\Gamma}^\sigma(z_0,T^*)$ with the
	payoff function 
	\[
		H(\varphi_1,\varphi_2)=
		\min_{t\in[0,T^*]}\rho(\chi(\varphi_1,\varphi_2)(t), M), 
		(z_0=(x_0^1,x_0^2))
	\]
	for every $\sigma\in\Sigma_T$ robot 1 has a strategy, guaranteeing
	a zero payoff for it, then in the process $\Gamma(z_0)$ there exists a 
	pair of $\varepsilon$-guaranteeing strategies for every 
	$\varepsilon > 0, \alpha >0$.
\end{theorem}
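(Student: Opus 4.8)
The plan is to reduce the statement to Theorem~2.3: once robot~1 is shown to possess a successful strategy in $\Gamma(z_0)$ for every $\alpha>0$, the existence of $\varepsilon$-guaranteeing strategies for every $\varepsilon>0,\ \alpha>0$ is exactly its conclusion. So the whole task becomes the following: for each fixed $\alpha>0$, produce a piecewise-programmed strategy of robot~1 which, against every $\varphi_2$, drives the realized trajectory $\chi(\varphi_1,\varphi_2)$ into $M_\alpha$ at some moment $t\le T^{*}<\infty$.

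First I would fix $\alpha>0$ and compute the value of the auxiliary processes on $[0,T^{*}]$. The payoff $H(\varphi_1,\varphi_2)=\min_{t\in[0,T^{*}]}\rho(\chi(\varphi_1,\varphi_2)(t),M)$ is nonnegative; since by hypothesis robot~1 (the minimizing player) guarantees the payoff $0$ in $\underline{\Gamma}^{\sigma}(z_0,T^{*})$ for every $\sigma\in\Sigma_{T^{*}}$, nonnegativity of the payoff together with the existence of saddle points (auxiliary statement~1) forces $Val(\underline{\Gamma}^{\sigma}(z_0,T^{*}))=0$ for every such $\sigma$, in particular along a binary refining sequence $\{\sigma_n\}\subset\Sigma_{T^{*}}^{2}$ with $|\sigma_n|\to 0$. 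By auxiliary statement~3 (statement~4 making the limit independent of the sequence), $\lim_n Val(\overline{\Gamma}^{\sigma_n}(z_0,T^{*}))=\lim_n Val(\underline{\Gamma}^{\sigma_n}(z_0,T^{*}))=0$. Now the process with payoff $\min_{t\in[0,T^{*}]}\rho(\chi(t),M)$ is precisely a process $\Gamma_2(z_0,T^{*})$ of evasion with prescribed duration (take the continuous function $H:X\times X\to R_1$ of Theorem~2.2 to be $\rho(\cdot,M)$), so Theorem~2.2 gives $Val(\overline{\Gamma}_2(z_0,T^{*}))=\lim_n Val(\overline{\Gamma}_2^{\sigma_n}(z_0,T^{*}))=0$ and a pair of $\varepsilon$-guaranteeing strategies for every $\varepsilon>0$.

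Next I would take $\varepsilon=\alpha$: the $\varepsilon$-guaranteeing strategy $\varphi_1^{\alpha}$ of robot~1 in $\Gamma_2(z_0,T^{*})$ satisfies $\min_{t\in[0,T^{*}]}\rho(\chi(\varphi_1^{\alpha},\varphi_2)(t),M)\le Val(\overline{\Gamma}_2(z_0,T^{*}))+\alpha=\alpha$ against every $\varphi_2$, i.e. the realized trajectory enters $M_\alpha=\{z:\rho(z,M)\le\alpha\}$ at some $t\le T^{*}$. Reading $\varphi_1^{\alpha}$ as a strategy in the infinite-horizon process $\Gamma(z_0)$ --- continue its partition of $[0,T^{*}]$ past $T^{*}$ by, say, the integers and define $K_\sigma$ arbitrarily beyond $T^{*}$, so the partition of $[0,\infty)$ still has no limit points --- one obtains a strategy of robot~1 under which, against every $\varphi_2$, the process $\Gamma(z_0)$ reaches $M_\alpha$, hence ends, before the finite time $T^{*}$. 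Thus $\varphi_1^{\alpha}$ is successful for this $\alpha$; as $\alpha>0$ was arbitrary, the hypothesis of Theorem~2.3 holds, and its conclusion is the desired statement.

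The point demanding real care, rather than being waved through, is the passage from the hypothesis --- phrased for the alternating-move auxiliary processes $\underline{\Gamma}^{\sigma}$ at a \emph{fixed} partition --- to a single piecewise-programmed strategy in the genuine process $\Gamma_2(z_0,T^{*})$: one has to check that ``$Val=0$ for every fixed $\sigma$'', combined with monotonicity under refinement (statement~2) and the coincidence of the upper and lower limiting values (statements~3--4), indeed yields such a strategy through Theorem~2.2, and that a strategy keeping the realized trajectory within distance $\alpha$ of $M$ on the \emph{prescribed} interval $[0,T^{*}]$ is literally a successful strategy for the time-of-capture process $\Gamma(z_0)$ at that $\alpha$. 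This is where the standing assumption $T^{*}<\infty$ (and, behind it, the definition of $T^{*}$ via $P_2(x_0^2,t)\subset P_1(x_0^1,t)$, which keeps the hypothesis non-vacuous) enters; the rest is bookkeeping on top of the already proved Theorems~2.1--2.3.
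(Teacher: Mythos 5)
Your proposal is correct and takes essentially the same route as the paper: it derives $Val=0$ for the prescribed-duration min-distance process from the hypothesis via the auxiliary statements and the preceding theorems, and then passes to the time-of-capture process through the successful-strategy hypothesis of the theorem on $\Gamma(x_0^1,x_0^2)$. Your write-up merely makes explicit (successful strategy for each $\alpha$ by taking $\varepsilon=\alpha$, extension of the strategy to $[0,\infty)$) the step the paper compresses into a single ``consequently''.
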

\begin{proof}
	In fact, according to the last two theorems 
	\[
		Val(\Gamma(z_0,T^*)) = 
		\lim_{n \to \infty}Val(\underline{\Gamma}^{\sigma_n}(z_0,T^*))
	\]
	and from the condition of theorem we have that 
	$Val(\Gamma(z_0,T^*)) = 0$. Consequently,
	in the process $\Gamma(z_0)$ there exists an equilibrium point
	for every $\varepsilon > 0,\alpha >0$. \qed
\end{proof}

Thus there are $\varepsilon$-optimal strategies for the robots in dynamic 
conflict processes and an algorithm for their numerical calculation is offered
in \cite{b1},\cite{b2}.

In that algorithms the continuous conflict process $\Gamma_1(\cdot)$ and 
$\Gamma_1(\cdot)$ are approximated by discrete multi-step processes with the 
informational discrimination of the pursuing robot $P_1$ or the evading robot 
$P_2$. For those multi-step processes, numerical algorithms of the robot 
optimal strategies calculation are constructed on the dynamic programing 
approach basis, and also numerical bound of the quality functional optimal 
value are elicited.

\section{Example Implementation}
Let us consider the following simple implementation of the ideas, described above. We will generate a rough approximation of the upper game under the following limitations:
\begin{enumerate}[1.]
\item The dynamics of each robot is defined by the dynamic system $\dot{x}_i=u_i, | u_i | \le const < \infty, i = 1,2$
\item The positions of the obstacles are known at every moment of time
\item Both robots are using the same uniform time partition with a time step $\Delta_t$
\item All obstacles are circle-shaped
\item The payoff function is defined as the distance between two robots at the final moment $T$
\end{enumerate}

More over, the decision tree is built as follows: the boundary of the attainability set (which is, obviously, a circle of radius $\Delta_t  u_i$) is uniformly subdivided, i.e. the subdivision is parametrized in terms of angular steps. Such approximation leads to solution instability when the $u_i$ value is high, however, it provides feasible results for small enough $u_i \le 1$ and is easy to implement. Thus, at each step $[2\pi / \Delta_\alpha] + 1$ tree branches are generated, where $\Delta_\alpha$ is an angular step. 

The algorithm runs as described below:
\begin{enumerate}[1.]
\item The decision tree is generated for each robot, as was mentioned before.
\item For each obstacle we check, whether the new robot position intersects a capsule, surrounding the obstacle positions at times $t_{i-1}$ and $t_i$. Whenever the intersection occurs - we remove the specific tree branch (i.e. we are removing a point from a attainability set boundary subdivision)
\item We calculate the payoff function for each point of the boundary of the attainability set subdivisions of both robots.
\item Second robot finds the best tree branch (i.e. with a maximum payoff) for each branch of the decision tree of the first robot
\item From the set of maxims, generated on the previous step robot 1 finds the branch, guaranteeing the minimal payoff
\item Positions, defined by the best tree branches are used at the next iteration of the algorithm
 
\end{enumerate} 

The algorithm runs for a predefined number of steps, calculated from the simulation time limit $T$ and the time subdivision interval $\Delta_t$.

The following trajectories are generated by the algorithm described:
\\
\begin{figure}[h]
	\centering
	\begin{subfigure}[t]{0.4\textwidth}
		\includegraphics[width=0.9\textwidth]{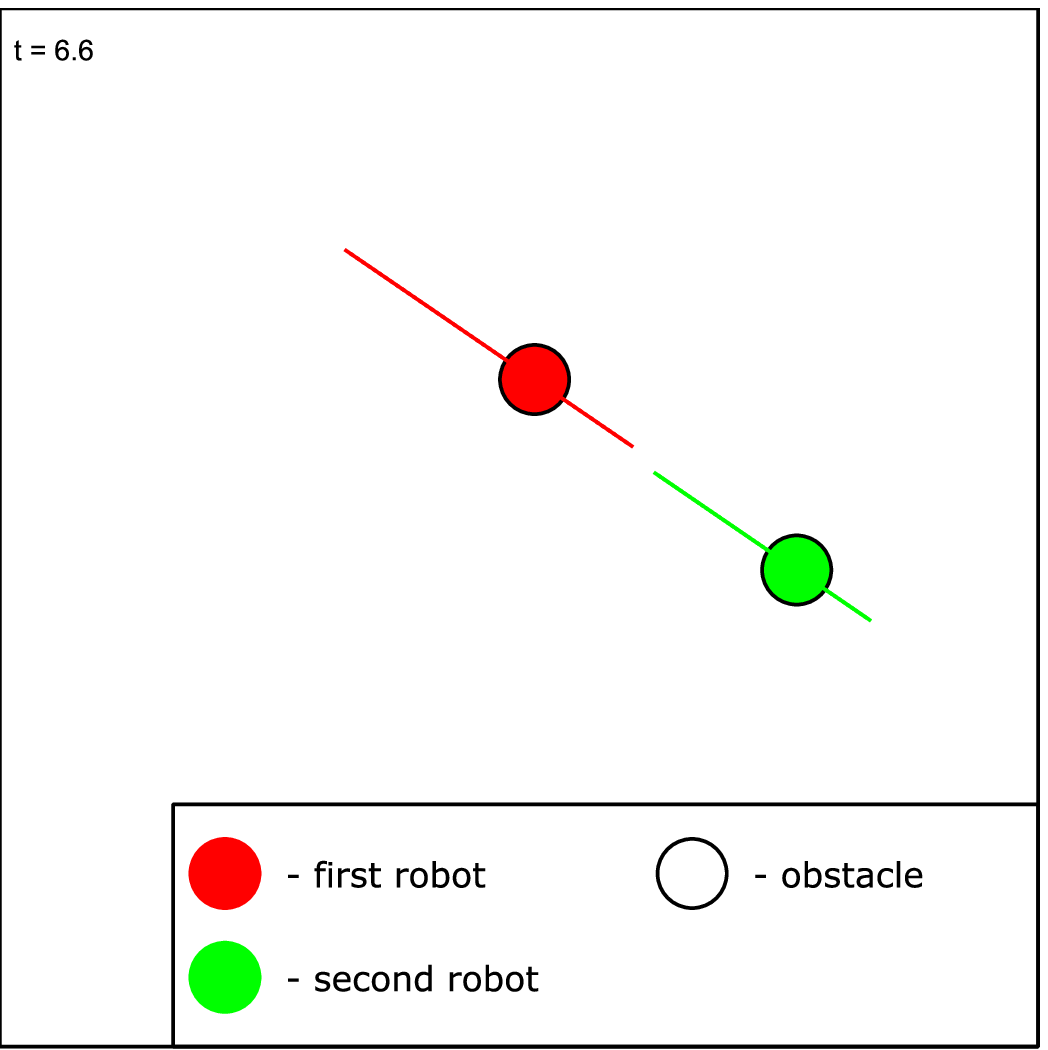}
		\caption{No obstacles}
	\end{subfigure}
	\begin{subfigure}[t]{0.4\textwidth}
		\includegraphics[width=0.9\textwidth]{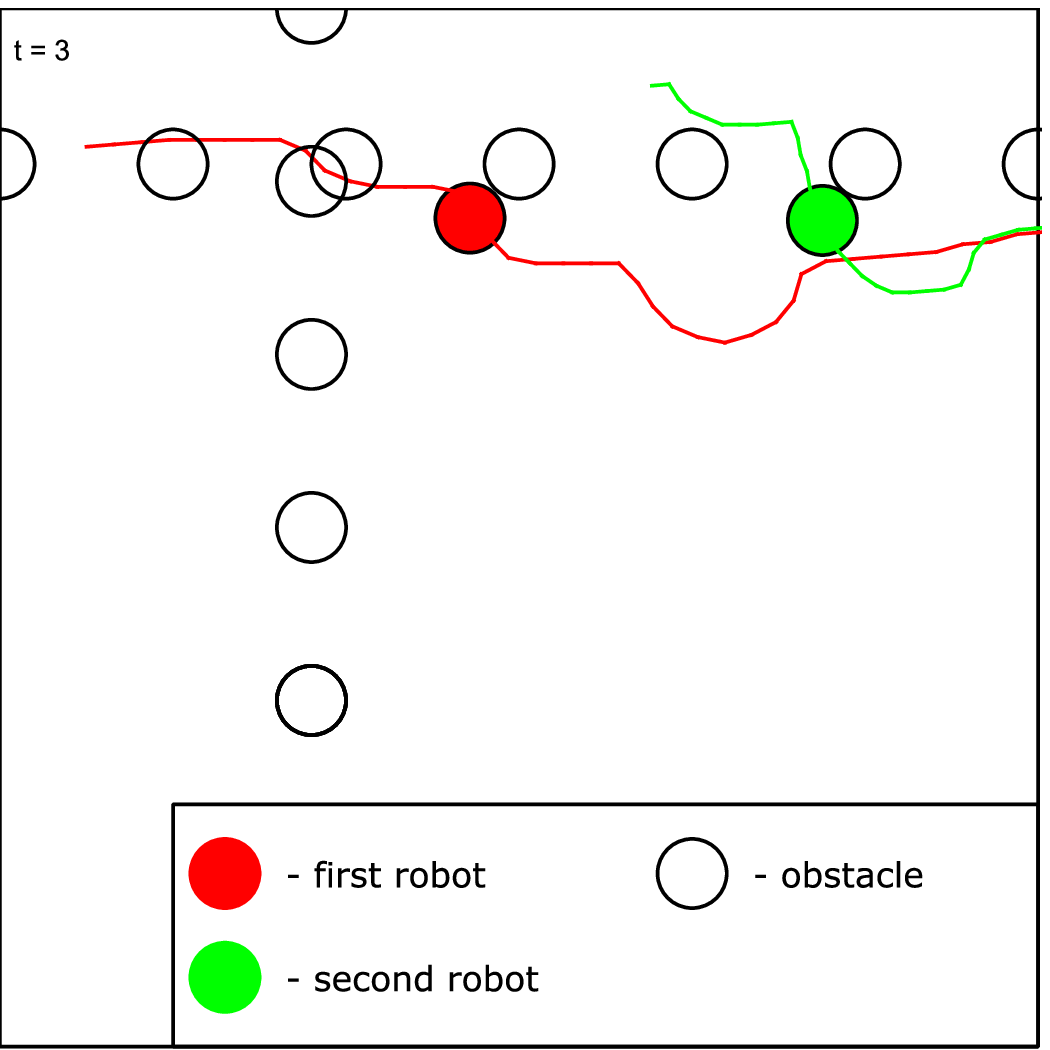}
		\caption{Simple trajectories}
	\end{subfigure}
\begin{subfigure}[t]{0.4\textwidth}
		\includegraphics[width=0.9\textwidth]{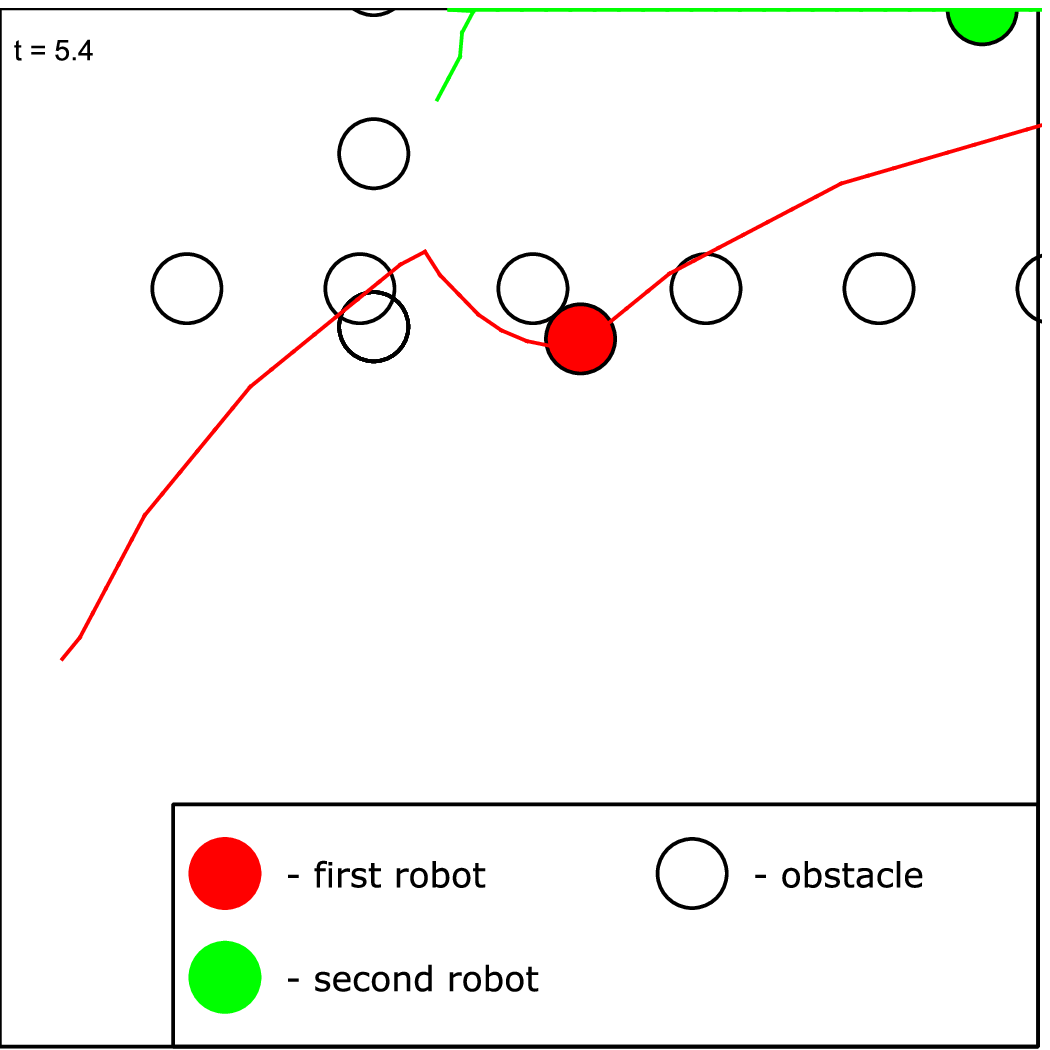}
		\caption{Diagonal trajectories }
	\end{subfigure}
	\begin{subfigure}[t]{0.4\textwidth}
		\includegraphics[width=0.9\textwidth]{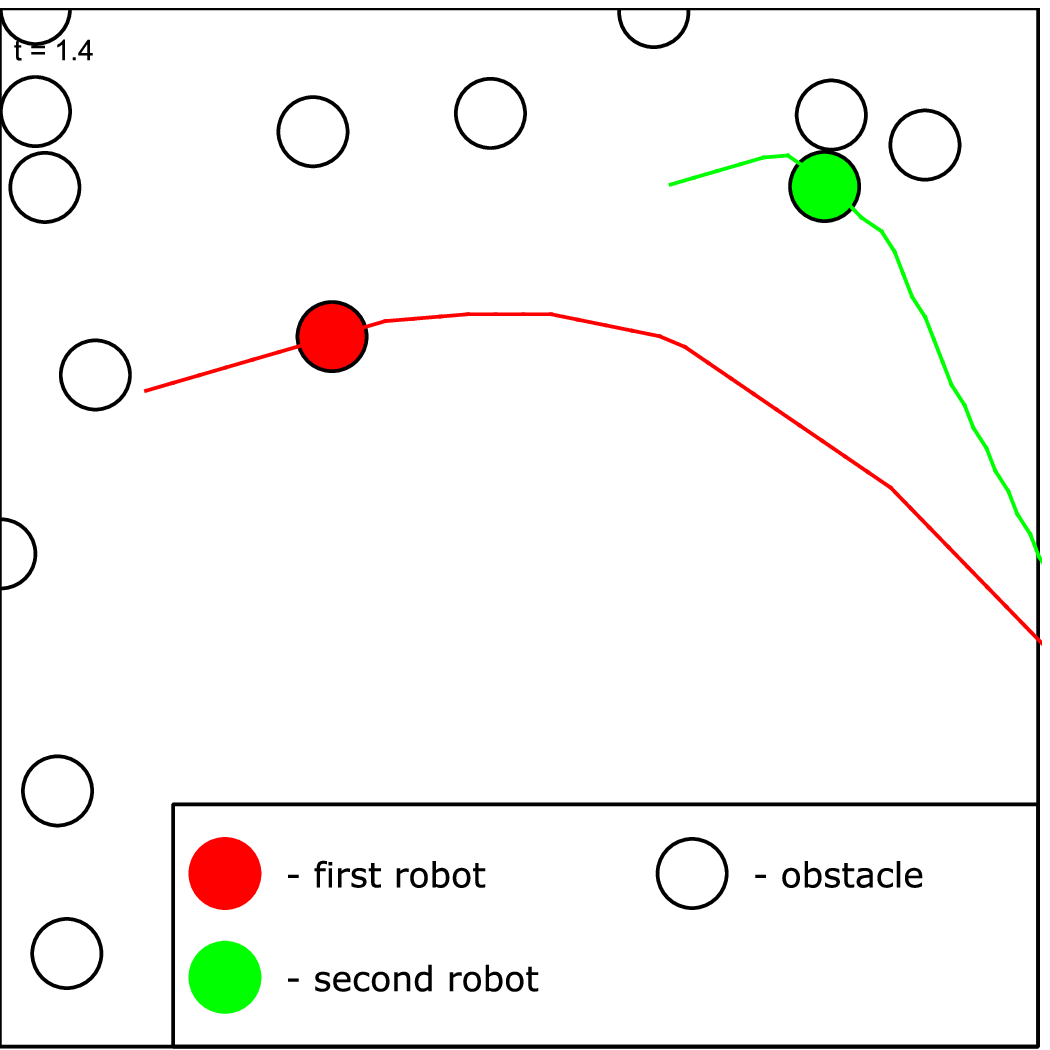}
		\caption{Haotic trajectories}
	\end{subfigure}
\end{figure}

On figure (a) we can see, that the algorithm is perfectly valid for the situations, when no obstacles are present. Next, we add a couple of obstacles to the scene, moving with the constant speed either vertically or horizontally (fig. b) or diagonally (fig. c). 
To make the situation more complicated - we select the random trajectories for the obstacles, which results in trajectories shown on figure (d). In all the cases $T=10$, $\Delta_t = 0.2$, $\Delta_\alpha=0.2$ and the dynamics of the robots are defined by the following
ODE system:
\begin{eqnarray*}
	&&\dot{x}_1 = u_1, | u_1 | <= 10 \\
	&&\dot{x}_2 = u_2, | u_2 | <= 8
\end{eqnarray*}

As it can be seen, algorithm gives expected results for both robots and according to the theorems - provides an approximation of the optimal trajectories, thus solving the problem stated in this paper.

The algorithm worst-case complexity is $ O( n_t n_a n_o ) $, where $n_t$, $n_a$, $n_o$ are respectively the number of time steps, angular step and obstacles. It can be improved to $ O( n_t n_a)$ average-case complexity
by using the spatial hashing to improve the collision detection with the obstacles.
\newline
\section{Acknowledgements}
The work is partly supported by work RFBR No. 18-01-00796.

\end{document}